\newcommand{\R}{\mathbb{R}}
\newcommand{\Rd}{\R^d}
\newcommand{\inr}[1]{\left\langle #1 \right\rangle}
\newcommand{\E}{\mathbb{E}}
\newcommand{\eps}{\varepsilon}
\newcommand{\diam}{{\rm diam}}
\newtheorem{lemma}{Lemma}
\newtheorem{theorem}{Theorem}
\numberwithin{equation}{section}
\def \endproof
\def\IND{\mathbbm{1}}
\newcommand{\ol}{\overline}
\newcommand{\wt}{\widetilde}
\newcommand{\wh}{\widehat}
\newcommand{\argmin}{\mathop{\mathrm{argmin}}}
\newcommand{\EXP}{\mathbb{E}}
\newcommand{\PROB}{\mathbb{P}}
\newcommand{\Tr}{\mathrm{Tr}}
\newcommand{\lambdamax}{\lambda_{\text{max}}}
\newcommand{\defeq}{\stackrel{\mathrm{def.}}{=}}
\begin{document}

\title{Sub-Gaussian estimators of the mean of a random vector
\thanks{
G\'abor Lugosi was supported by
the Spanish Ministry of Economy and Competitiveness,
Grant MTM2015-67304-P and FEDER, EU. Shahar Mendelson was supported in part by the Israel Science Foundation.
}
}
\author{
G\'abor Lugosi\thanks{Department of Economics and Business, Pompeu
  Fabra University, Barcelona, Spain, gabor.lugosi@upf.edu}
\thanks{ICREA, Pg. Lluís Companys 23, 08010 Barcelona, Spain}
\thanks{Barcelona Graduate School of Economics}
\and
Shahar Mendelson \thanks{Department of Mathematics, Technion, I.I.T, and Mathematical Sciences Institute, The Australian National University, shahar@tx.technion.ac.il}}

\maketitle

\begin{abstract}
We study the problem of estimating the mean of a random vector $X$
given a sample of $N$ independent, identically distributed points.
We introduce a new estimator that achieves a purely sub-Gaussian
performance under the only condition that the second moment of
$X$ exists. The estimator is based on a novel concept of a multivariate median.

\noindent
{\bf 2010 Mathematics Subject Classification:} 62J02, 62G08, 60G25.
\end{abstract}

\section{Introduction}

In this paper we study the problem of estimating the mean
of a random vector $X$ taking values in $\R^d$.
Denoting the mean by $\mu = \EXP X$,
we assume throughout the paper that the
covariance matrix $\Sigma= \EXP (X-\mu) (X-\mu)^T$ exists.
Suppose that $N$ independent, identically distributed samples
$X_1,\ldots,X_N$ drawn from the distribution of $X$ are available, and one
wishes to estimate the mean vector $\mu$.
An estimator is simply a function of the data that we denote by
$\wh{\mu}_N = \wh{\mu}_N(X_1,\ldots,X_N)$.

There are many possible
ways of measuring the quality of an estimator. The classical statistical
literature tended to focus on risk measures such as
the mean squared error $\EXP \|\wh{\mu}_N-\mu\|^2$.
(Here, and in the rest of the paper,
$\|\cdot\|$ denotes the Euclidean norm in $\Rd$, $S^{d-1}=\{v\in \R^d: \|v\|=1\}$ denotes the Euclidean sphere in $\Rd$
and $\inr{\cdot,\cdot}$ is the usual inner product in $\Rd$.) In this case the sample mean  $\ol{\mu}_N=(1/N)\sum_{i=1}^N X_i$
has a mean squared error equal to $\Tr(\Sigma)/N$
(where $\Tr(\Sigma)$ denotes the trace of the covariance matrix)
and, even though
this estimator is not necessarily optimal even for standard normal vectors---by ``Stein's paradox'', see \cite{JaSt61}---, the order of magnitude of the error cannot be improved in general.

The situation is quite different when one is interested in minimizing
the value $r$ that satisfies
\[
\PROB\left\{ \left\|\wh{\mu}_N-\mu\right\| > r\right\} \le \delta
\]
for some given $\delta >0$. While one may always take $r=\sqrt{\Tr(\Sigma)/(N\delta)}$
for the sample mean, much better dependence on $\delta$ may be achieved
if the
distribution is sufficiently light tailed.
For example, if $X$ has a multivariate normal distribution with
mean $\mu$
and
covariance matrix $\Sigma$, then the sample mean $\ol{\mu}_N$
is also multivariate
normal with mean $\mu$ and covariance matrix $(1/N)\Sigma$ and
therefore, for $\delta \in (0,1)$, with probability at least $1-\delta$,
\begin{equation}
\label{eq:subgauss}
   \left\|\ol{\mu}_N-\mu\right\| \le \sqrt{\frac{\Tr(\Sigma)}{N}}
    + \sqrt{\frac{2\lambdamax \log(1/\delta)}{N}}~,
\end{equation}
where $\lambdamax$ denotes the  largest
eigenvalue of $\Sigma$ (see Hanson and Wright \cite{HaWr71}).
Similar bounds may be proven for the performance of the sample mean
if $X$ has a sub-Gaussian distribution in the sense that for all
unit vectors $v\in S^{d-1}$,
$$
\EXP\exp(\lambda \inr{v,X-\EXP X}) \le \exp(c\lambda^2\inr{v,\Sigma v})
$$
for some constant $c$.

However, when the distribution is not necessarily sub-Gaussian and is possibly
heavy-tailed, one cannot expect such a sub-Gaussian
behavior of the sample mean. Thus, when is it not reasonable to assume
a sub-Gaussian distribution and heavy tails may be a concern, the
sample mean is a risky choice. Indeed, alternative estimators have
been constructed to achieve better performance.

The one-dimensional case (i.e., $d=1$) is quite well understood, see
Catoni \cite{Cat10} and
Devroye, Lerasle, Lugosi, and Oliveira \cite{DeLeLuOl16} for recent accounts.
The so-called \emph{median-of-means} estimator is
a simple and powerful univariate estimator with essentially optimal performance.
This estimate was introduced independently
in various papers, see
Nemirovsky and Yudin \cite{NeYu83},
Jerrum, Valiant, and Vazirani \cite{JeVaVa86},
Alon, Matias, and Szegedy \cite{AMS02}.
The median-of-means estimator partitions the data into $k<N$ blocks of
size $m\approx N/k$ each, computes the sample mean within each block,
and outputs their median.
One may easily show (see, e.g., Hsu \cite{HsuBlog}) that, for any
$\delta \in (0,1)$ if $k=\lceil8\log(1/\delta) \rceil$, then
the resulting estimator $\wh{\mu}_N^{(\delta)}$
satisfies
that, with probability at least $1-\delta$,
\begin{equation}
\label{eq:mom}
\left|\wh{\mu}_N^{(\delta)} - \mu\right| \le 8\sigma \sqrt{\frac{\log(2/\delta)}{N}}
\end{equation}
where $\sigma^2$ denotes the variance of $X$. In other words, in the one-dimensional case, the
median-of-means estimator achieves a sub-Gaussian performance under
the only condition that the variance of $X$ exists.

The median-of-means estimator has been extended to the multivariate
case by replacing the median by its natural
multivariate extension, the so-called ``geometric (or spatial) median''
(i.e., the point that minimizes the sum of the Euclidean distances to the
sample means within each block)
see Lerasle and Oliveira \cite{LerasleOliveira_Robust},
Hsu and Sabato \cite{HsSa16}, Minsker \cite{Min15}.
In particular, Minsker proves that for each $\delta\in (0,1)$
this generalization of the median-of-means estimator $\wt{\mu}_N^{(\delta)}$ is such that,
with probability at least $1-\delta$,
\begin{equation}
\label{eq:minsker}
 \left\|\wt{\mu}_N^{(\delta)}-\mu\right\| \le C\sqrt{\frac{\Tr(\Sigma) \log(1/\delta)}{N}}~,
\end{equation}
where $C$ is  a universal constant. This bound holds under the only assumption
that the covariance matrix exists. However, it does not
quite achieve a sub-Gaussian performance bound that resembles
\eqref{eq:subgauss}.

Joly, Lugosi, and Oliveira \cite{JoLuOl17} made an attempt to
construct a mean estimator with a sub-Gaussian behavior for a large
class of distributions.  They prove that there exists a mean estimator
$\wh{\mu}_n^{(\delta)}$ such that, if the distribution satisfies that
for all $v\in S^{d-1}$
\[
   \EXP\left[ \inr{(X-\mu),v}^4\right] \le K (\inr{v,\Sigma v})^2~,
\]
for some constant $K$,
then for all $N\ge CK \log d \left(d + \log (1/\delta)\right)$, with
probability at least $1-\delta$,
\begin{equation}
 \label{eq:JoLuOlbound}
   \left\|\wh{\mu}_N^{(\delta)}-\mu\right\|
\le C  \left(\sqrt{\frac{\Tr(\Sigma)}{N}}
 + \sqrt{\frac{\lambdamax \log(\delta^{-1}\log d)}{N}} \right)~,
\end{equation}
where again $C$ is a universal constant. This bound resembles the
sub-Gaussian inequality \eqref{eq:subgauss}. However, there are
various caveats: the additional fourth-moment assumption, the
requirement that $N=\Omega(d\log d)$, and, to a lesser extent, the
extra $\log\log d$ term in the bound seem sub-optimal.

The main result of this paper is that there exists a mean estimator that
achieves purely sub-Gaussian performance under the minimal condition
that the covariance matrix exists. More precisely, we prove the
existence of a mean estimator $\wh{\mu}_N^{(\delta)}$ such that, for all
distributions with a finite second moment, for all $N$, with probability at
least $1-\delta$,
\[
   \left\|\wh{\mu}_N^{(\delta)}-\mu\right\|
\le C  \left(\sqrt{\frac{\Tr(\Sigma)}{N}}
 + \sqrt{\frac{\lambdamax \log(2/\delta)}{N}} \right)~,
\]
for an explicit universal constant $C$.

The proposed estimator may be interpreted as a multivariate median-of-means
estimate but with a new notion of a multivariate median which may be
interesting in its own right. The construction of the new estimator
is inspired by the  technique of
``median-of-means tournament'', put forward by the authors in \cite{LuMe16}.

In the next section we present the proposed estimator and the
performance bound. In Section \ref{sec:proof} we present the proofs.
We finish the paper by remarks about the computation of the estimator.

\section{The estimator}

Here we introduce the proposed mean estimator.
Recall that we are given an i.i.d.\ sample $X_1,\ldots,X_N$
of random vectors in $\Rd$.
As in the case of the median-of-means
estimator, we start by partitioning the set
$\{1,\dots,N\}$ into $k$ blocks $B_1,\ldots,B_k$, each of
size
$|B_j|\geq m \defeq \lfloor N/k\rfloor$, where $k$ is a parameter
of the estimator whose value depends on the desired confidence level,
as specified below.
In order to simplify the presentation, in the rest of the paper, without loss of generality, we assume that $N$ is divisible by $k$ and therefore $|B_j|=m$
for all $j=1,\ldots,k$.

Define the sample mean within each block by
\[
Z_j=\frac{1}{m}\sum_{i\in B_j}X_i~.
\]
For each $a\in \R^d$, let
\begin{equation}
\label{eq:sa}
    S_a=\left\{ x\in \R^d: \exists J\subset [k]: |J|> k/2 \ \text{such that} \
      \min_{j\in J} \left(\|Z_j-x\| - \|Z_j-a\|\right) >0 \right\}
\end{equation}
and define the mean estimator by
\[
\wh\mu_N \in \argmin_{a\in \Rd} \diam(S_a^c)~.
\]
Thus, $\wh\mu_N$ is chosen to minimize, over all $a\in \R^d$,
the diameter of the complement of set $S_a$ defined as the set of points $x\in \R^d$ for which
$\|Z_j-x\| > \|Z_j-a\|$ for the majority of the blocks, and if there are several minimizers, one may pick any one of them.

Note that the minimum is always achieved. This follows from the fact
that $\diam(S_a^c)$ is a continuous function of $a$
(since, for each $a$, $S_a^c$ is the intersection of a finite union of
closed balls, and the centers and radii of the closed balls are continuous in $a$).

One may interpret $\argmin_{a\in \Rd} \diam(S_a^c)$ as a new multivariate notion
of the median of $Z_1,\ldots,Z_k$. Indeed, when $d=1$, it is a particular
choice of the median and
the proposed estimator coincides
with the median-of-means estimator.

The main result of this paper is the following performance bound:

\begin{theorem} \label{thm:main}
Let $\delta \in (0,1)$ and consider the mean
estimator $\wh{\mu}_N$ with parameter $k= \lceil 360\log(2/\delta)\rceil $.
If $X_1,\ldots,X_N$ are i.i.d.\ random vectors in $\Rd$ with mean
$\mu\in \Rd$ and covariance matrix $\Sigma$, then for all $N$, with probability at least $1-\delta$,
\[
   \left\|\wh{\mu}_N-\mu\right\|
\le  2\max\left(400 \sqrt{\frac{\Tr(\Sigma)}{N}},
  240 \sqrt{\frac{\lambdamax \log(2/\delta)}{N}} \right)~.
\]
\end{theorem}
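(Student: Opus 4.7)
The plan is to split the argument into a deterministic geometric reduction and a single uniform empirical-process event.

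Set $r=2\max\bigl(400\sqrt{\Tr(\Sigma)/N},\,240\sqrt{\lambdamax\log(2/\delta)/N}\bigr)$ and write $Y_j(v)\defeq\inr{Z_j-\mu,v}$. Expanding squared norms shows that $\|Z_j-x\|>\|Z_j-a\|$ is equivalent to $\inr{Z_j-a,x-a}<\|x-a\|^2/2$, which with $a=\mu$, $x=\mu+tv$, $v\in S^{d-1}$, becomes $Y_j(v)<t/2$. Define the favorable event
\[
E=\Bigl\{\forall v\in S^{d-1}:\ \bigl|\{j:Y_j(v)\ge r/4\}\bigr|<k/2\Bigr\}.
\]
On $E$, every $x$ with $t\defeq\|x-\mu\|\ge r/2$ satisfies $Y_j((x-\mu)/t)<t/2$ for a strict majority of blocks $j$ (since $t/2\ge r/4$), so $x\in S_\mu$; hence $S_\mu^c\subseteq B(\mu,r/2)$ and $\diam(S_\mu^c)\le r$. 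Applying $E$ to $v=(\wh\mu-\mu)/\|\wh\mu-\mu\|$ (when $\|\wh\mu-\mu\|\ge r/2$, the only case to consider) shows $\|Z_j-\wh\mu\|>\|Z_j-\mu\|$ for a strict majority of $j$, precluding the reverse majority and forcing $\mu\in S_{\wh\mu}^c$. Since $\wh\mu\in S_{\wh\mu}^c$ trivially and $\wh\mu$ minimizes the diameter of $S_a^c$,
\[
\|\wh\mu-\mu\|\le\diam(S_{\wh\mu}^c)\le\diam(S_\mu^c)\le r,
\]
which is the bound claimed by the theorem.

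The probabilistic heart is thus to prove $\PROB(E)\ge 1-\delta$. I would majorize $\IND_{y\ge r/4}$ by the $(8/r)$-Lipschitz function $\phi$ that equals $0$ on $(-\infty,r/8]$, $1$ on $[r/4,\infty)$, and is linear between, so that $\phi(0)=0$ and $\IND_{y\ge r/4}\le\phi(y)\le\IND_{y\ge r/8}$. It then suffices to show that $F\defeq\sup_{v\in S^{d-1}}\sum_{j=1}^k\phi(Y_j(v))<k/2$ with probability at least $1-\delta$. First, Chebyshev gives the pointwise bound $\EXP\phi(Y_1(v))\le\PROB(Y_1(v)\ge r/8)\le 64k\inr{v,\Sigma v}/(Nr^2)$, so the definition of $r$ keeps $k\sup_v\EXP\phi(Y_1(v))$ well below $k/4$. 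Next, symmetrization followed by the Ledoux--Talagrand contraction lemma applied to $\phi$ yields
\[
\EXP\sup_v\Bigl|\sum_{j=1}^k\bigl(\phi(Y_j(v))-\EXP\phi(Y_1(v))\bigr)\Bigr|\ \le\ \frac{C}{r}\,\EXP\Bigl\|\sum_{j=1}^k\eps_j(Z_j-\mu)\Bigr\|\ \le\ \frac{Ck}{r}\sqrt{\frac{\Tr(\Sigma)}{N}},
\]
which is again below $k/4$ by the definition of $r$. Finally, perturbing a single $Z_j$ moves $F$ by at most one, so McDiarmid's inequality concentrates $F$ around its expectation at scale $\sqrt{k\log(2/\delta)/2}$; for $k=\lceil 360\log(2/\delta)\rceil$ this fluctuation also fits under $k/4$, and combining the three contributions gives $F<k/2$ with probability at least $1-\delta$.

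The main obstacle is the uniform control over $v\in S^{d-1}$ without a dimensional penalty: the class of half-space indicators on $\Rd$ has VC-dimension of order $d$, so a naive $\eps$-net argument would inject a factor scaling with $d$ and destroy the sub-Gaussian bound. The Lipschitz-majorant-plus-contraction device is the key step that trades this combinatorial complexity for the $\ell_2$ complexity $\EXP\|\sum_j\eps_j(Z_j-\mu)\|\le k\sqrt{\Tr(\Sigma)/N}$, thereby producing the $\sqrt{\Tr(\Sigma)/N}$ contribution, while the pointwise Chebyshev step is responsible for the $\sqrt{\lambdamax\log(2/\delta)/N}$ contribution. The explicit constants $400$, $240$, and $360$ in the statement are calibrated to ensure that each of the three sources of error (pointwise mean, Rademacher deviation, McDiarmid fluctuation) remains below $k/4$.
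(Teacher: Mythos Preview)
Your proposal is correct and essentially parallels the paper's argument, but the probabilistic step is organized differently. Your deterministic reduction is identical to the paper's: event $E$ is exactly the conclusion of the paper's Theorem~\ref{thm:geometry} (with your $r$ equal to twice the paper's $r$), and the passage from $E$ to $\|\wh\mu_N-\mu\|\le r$ via $\diam(S_\mu^c)\le r$ and $\mu\in S_{\wh\mu}^c$ is the same deduction the paper gives between Theorem~\ref{thm:geometry} and Theorem~\ref{thm:main}.

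Where you differ is in establishing $\PROB(E)\ge 1-\delta$. The paper splits the sphere into a finite net in the $L_2(X)$ norm, controls the net via Chebyshev, a binomial tail bound, and a union bound (invoking the dual Sudakov inequality to size the net), and then handles the oscillation off the net by majorizing the indicator by $(4/r^2)|\cdot|$, followed by symmetrization, contraction, and the bounded differences inequality. You bypass the net entirely by replacing the hard indicator $\IND_{y\ge r/4}$ with a single Lipschitz majorant $\phi$ and running symmetrization, Ledoux--Talagrand contraction, and McDiarmid on the whole sphere at once. Your route is shorter and avoids the covering-number machinery; the paper's route makes more explicit which part of the bound comes from a fixed direction (the net term) versus uniform fluctuations (the oscillation term). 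One small bookkeeping point: with three contributions each bounded by $k/4$ you would only get $F<3k/4$, not $F<k/2$; but with the stated values of $r$ and $k$ a quick check shows each term is in fact comfortably below $k/6$, so the conclusion stands.
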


Thus, the proposed estimator achieves a purely sub-Gaussian performance
under minimal conditions. Just like in the case of the median-of-means
estimator for the univariate case, the estimator depends on the desired
level of confidence $\delta$. As it is shown in \cite{DeLeLuOl16}, such
a dependence cannot be avoided without imposing additional conditions
on the distribution. However, following the route laid down in \cite{DeLeLuOl16},
one may construct sub-Gaussian estimators that work for a wide range of
confidence levels simultaneously under more assumptions on the distribution.
Since this issue is beyond the scope of this paper and will not be
pursued further here.

Just like Minsker's bound \eqref{eq:minsker}---but unlike the bound
\eqref{eq:JoLuOlbound}---, the performance bound of
Theorem \ref{thm:main} is ``infinite-dimensional'' in the sense that
the bound does not depend on the dimension $d$ explicitly. Indeed,
the same estimator may be defined for Hilbert-space valued random vectors
and Theorem \ref{thm:main} remains valid as long as $\Tr(\Sigma)=\EXP\|X-\mu\|^2$
is finite.

Theorem \ref{thm:main} is an outcome of the following observation which is of interest in its own right on the geometry of a typical collection $\{X_1,...,X_N\}$.
\begin{theorem} \label{thm:geometry}
Using the same notation as above and setting
\[
r=\max\left(400 \sqrt{\frac{\Tr(\Sigma)}{N}},
  240 \sqrt{\frac{\lambdamax \log(2/\delta)}{N}} \right)~,
\]
with probability at least $1-\delta$, for any $a \in \Rd$ such that $\|a-\mu\| \geq r$, one has $\|Z_j-a\| > \|Z_j-\mu\|$ for more than $k/2$ indices $j$.
\end{theorem}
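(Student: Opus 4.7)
The plan is to reformulate the geometric conclusion as a uniform small-ball inequality indexed by unit vectors and then control the resulting empirical process dimension-freely via Talagrand's contraction principle. Expanding
\[
\|Z_j-a\|^2-\|Z_j-\mu\|^2 = \|a-\mu\|^2 - 2\langle a-\mu,Z_j-\mu\rangle,
\]
one sees that $\|Z_j-a\|>\|Z_j-\mu\|$ is equivalent to $\langle v,Z_j-\mu\rangle<\|a-\mu\|/2$, where $v=(a-\mu)/\|a-\mu\|\in S^{d-1}$. Since this condition only becomes easier as $\|a-\mu\|$ grows, the assertion for every $a$ with $\|a-\mu\|\geq r$ reduces to proving that, with probability at least $1-\delta$,
\[
\sup_{v\in S^{d-1}} \#\bigl\{j\leq k:\,\langle v,Z_j-\mu\rangle\geq r/2\bigr\} < k/2.
\]

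To handle this supremum I would first replace the hard indicator by a piecewise-linear majorant $\phi:\R\to[0,1]$ equal to $0$ on $(-\infty,r/4]$ and $1$ on $[r/2,\infty)$; it is $(4/r)$-Lipschitz, satisfies $\phi(0)=0$, and squeezes $\IND\{t\geq r/2\}\leq \phi(t)\leq \IND\{t\geq r/4\}$. A Chebyshev estimate with $\var(\langle v,Z_j-\mu\rangle)=v^T\Sigma v/m\leq \lambdamax/m$ gives $\E\phi(\langle v,Z_j-\mu\rangle)\leq 16\lambdamax/(mr^2)$, and substituting $r\geq 240\sqrt{\lambdamax\log(2/\delta)/N}$ together with $m=N/k$ and $k\approx 360\log(2/\delta)$ yields $\sup_v\E\sum_j\phi\leq k/10$. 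The centered supremum is then controlled by symmetrization followed by Talagrand's contraction principle (applied conditionally on $Z_1,\dots,Z_k$, using $\phi(0)=0$ and Lipschitz constant $4/r$), producing
\[
\E\sup_v\Big|\sum_{j=1}^k\bigl(\phi(\langle v,Z_j-\mu\rangle)-\E\phi(\langle v,Z_j-\mu\rangle)\bigr)\Big|\leq \frac{16}{r}\,\E\Big\|\sum_{j=1}^k \eps_j(Z_j-\mu)\Big\|\leq \frac{16k}{r}\sqrt{\frac{\Tr(\Sigma)}{N}},
\]
where the last step uses Cauchy--Schwarz together with $\E\|Z_j-\mu\|^2=\Tr(\Sigma)/m$. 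With $r\geq 400\sqrt{\Tr(\Sigma)/N}$ this is at most $k/25$. Since replacing any one $Z_j$ shifts $\sup_v\sum_j\phi$ by at most $1$ (as $\phi\in[0,1]$), McDiarmid's bounded-differences inequality concentrates this supremum around its mean at scale $\sqrt{k\log(1/\delta)/2}$; this is $O(k)$ but with a small constant because $k\asymp 360\log(1/\delta)$. Summing the three contributions (mean, symmetrization, concentration) keeps $\sup_v\sum_j\phi$ strictly below $k/2$ with probability at least $1-\delta$.

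The main obstacle is to obtain a bound that is uniform over the unit sphere $S^{d-1}$ without paying a factor depending on $d$: a naive $\eps$-net argument on $S^{d-1}$ would cost $\exp(d)$. Talagrand's contraction principle is precisely what bypasses this, reducing the Lipschitz-indexed empirical process to the linear one $v\mapsto\langle v,\sum_j\eps_j(Z_j-\mu)\rangle$, whose supremum over $S^{d-1}$ is merely the Euclidean norm $\|\sum_j\eps_j(Z_j-\mu)\|$. The second moment of this norm involves only $\Tr(\Sigma)$ and $N$, independent of $d$; this infinite-dimensional character is exactly what delivers the sub-Gaussian form of the theorem.
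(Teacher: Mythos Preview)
Your argument is correct and in fact takes a cleaner route than the paper's. Both proofs reduce to controlling, uniformly over $v\in S^{d-1}$, the number of blocks on which $\langle v,Z_j-\mu\rangle$ is large, and both finish with a bounded-differences concentration step. The difference is in how uniformity over the sphere is obtained. The paper first fixes $v$, uses Chebyshev plus a binomial tail to get a pointwise estimate, and then passes to all of $r\cdot S^{d-1}$ via an $\eps$-net taken in the $L_2(X)$ (covariance-weighted) norm; the size of this net is controlled by the dual Sudakov inequality, and the residual oscillation between a point and its nearest net point is handled by a separate symmetrization/contraction argument. Your approach bypasses the net entirely: by replacing the hard indicator with the Lipschitz surrogate $\phi$, you can apply Talagrand's contraction principle to the full process at once, reducing immediately to $\bigl\|\sum_j\eps_j(Z_j-\mu)\bigr\|$, whose second moment is $k\,\Tr(\Sigma)/m$. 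This is more direct and avoids invoking dual Sudakov; the paper's two-scale decomposition (net plus oscillation) is somewhat heavier machinery for this particular statement, though it is closer in spirit to arguments that extend to more structured index sets. Your numerical check also goes through: with $\E Y\le k/10+k/25=7k/50$ and McDiarmid fluctuation of order $\sqrt{(k/2)\log(1/\delta)}\approx 0.04k$ (since $k\approx 360\log(2/\delta)$), the total stays well below $k/2$.
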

Theorem \ref{thm:geometry} implies that for a `typical' collection $X_1,...,X_N$, $\mu$ is closer to a majority of the $Z_j$'s when compared to any $a \in \Rd$ that is sufficiently far from $\mu$. Obviously, for an arbitrary collection $x_1,...,x_N \subset \Rd$ such a point need not exist, and it is rather surprising that for a typical i.i.d.\ configuration, this property is satisfied by $\mu$.

The fact that Theorem \ref{thm:geometry} implies Theorem \ref{thm:main} is straightforward. Indeed, Theorem \ref{thm:geometry} implies that $\diam(S_\mu^c) \leq 2r$ and that if $\|a-\mu\| \geq r$, then $\mu \in S_a^c$. By the definition of $S_a$, one always has $a \in S_a^c$, and thus if  $\|a-\mu\| > 2r$ then $\diam(S_a^c) > 2r$. Therefore, the minimizer $\wh{\mu}$ must satisfy that $\|\wh{\mu}-\mu\| \leq 2r$, as required.

We do not claim that the values of the constants appearing in Theorem \ref{thm:main} are optimal. They were obtained with the goal of making the proof transparent, nothing more, and it is likely that they may be improved by more careful calculations.

The proof of Theorem \ref{thm:geometry} is based on the idea of  ``median-of-means tournaments'' which was introduced
by Lugosi and Mendelson \cite{LuMe16} is the context of regression function
estimation.

\section{Proof}
\label{sec:proof}

The proof of Theorem \ref{thm:geometry} is based on the following idea.
The mean $\mu$ is the minimizer of the function $f(x)= \EXP \|X-\mu\|^2$.
A possible approach is to use the available data to guess, for
any pair $a,b\in\R^d$, whether $f(a)< f(b)$. To this end, we may
set up a ``tournament'' as follows.

Recall that $[N]$ is partitioned into $k$ disjoint blocks $B_1,\ldots,B_k$
of size $m=N/k$. For $a,b \in \R^d$, we say that
$a$ \emph{defeats} $b$ if
\[
\frac{1}{m} \sum_{i \in B_j} \left(\|X_i-b\|^2 - \|X_i-a\|^2\right) > 0
\]
on more than $k/2$  blocks $B_j$. The main technical lemma is the following.

\begin{lemma}
\label{lem:tournament}
Let $\delta\in (0,1)$, $k= \lceil 360\log(2/\delta)\rceil$, and define
\[
r=\max\left(400 \sqrt{\frac{\Tr(\Sigma)}{N}},
  240 \sqrt{\frac{\lambdamax \log(2/\delta)}{N}} \right)~.
 \]
 With probability at least $1-\delta$,
$\mu$ defeats all $b\in \Rd$ such that
$\|b-\mu\| \geq r$.
\end{lemma}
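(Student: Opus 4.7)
My plan is to recast the condition ``$\mu$ defeats $b$'' geometrically, reduce it to a uniform bound on the unit sphere, and then handle that bound via standard empirical-process machinery. Expanding $\|X_i-b\|^2 - \|X_i-\mu\|^2 = \|b-\mu\|^2 - 2\inr{X_i-\mu,b-\mu}$ and averaging over a block $B_j$ shows that the block-$j$ inequality in the definition of defeat is equivalent to $\inr{Z_j-\mu,b-\mu} < \|b-\mu\|^2/2$. Writing $b-\mu = \rho v$ with $v\in S^{d-1}$ and $\rho = \|b-\mu\|\geq r$ turns this into $\inr{Z_j-\mu,v} < \rho/2$, a condition that only weakens as $\rho$ grows. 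It therefore suffices to prove that, with probability at least $1-\delta$,
\[
\sup_{v\in S^{d-1}}\,\bigl|\{j\in[k] : \inr{Z_j-\mu,v}\geq r/2\}\bigr| < k/2.
\]

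To control this counting function uniformly in $v$, I would pass to a Lipschitz surrogate: let $\phi:\R\to[0,1]$ be the piecewise-linear ramp equal to $0$ on $(-\infty,r/4]$ and to $1$ on $[r/2,\infty)$, so $\phi$ is $(4/r)$-Lipschitz with $\phi(0)=0$ and $\IND_{\{x\geq r/2\}}\leq\phi(x)$. Setting $W_j(v)=\inr{Z_j-\mu,v}$ and $\Phi(v)=\sum_{j=1}^k \phi(W_j(v))$, it is enough to show $\sup_v\Phi(v) < k/2$. For each fixed $v$, Chebyshev's inequality applied to $W_j(v)$, whose variance equals $\inr{v,\Sigma v}/m \leq \lambdamax/m$, gives $\E\phi(W_j(v))\leq 16\lambdamax/(r^2 m)$, and the choices of $r$ and $k$ yield $r^2 m \geq 240^2\lambdamax/360 = 160\lambdamax$, so this expectation is at most $1/10$. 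Hence $\sup_v\E\Phi(v)\leq k/10$; this is where the $\sqrt{\lambdamax\log(2/\delta)/N}$ term in $r$ plays its role.

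The main technical step is bounding the centered supremum $\E\sup_v(\Phi(v)-\E\Phi(v))$. Symmetrization combined with the Ledoux--Talagrand contraction principle (applicable since $\phi(0)=0$ and $\phi$ is $(4/r)$-Lipschitz), together with the identification $\sup_{v\in S^{d-1}}\sum_j \epsilon_j\inr{Z_j-\mu,v} = \|\sum_j \epsilon_j(Z_j-\mu)\|$, yields
\[
\E\sup_{v\in S^{d-1}}\bigl(\Phi(v)-\E\Phi(v)\bigr) \;\leq\; \frac{8}{r}\,\E\Bigl\|\sum_{j=1}^k \epsilon_j(Z_j-\mu)\Bigr\|.
\]
Jensen's inequality and the orthogonality of the independent centered blocks bound the right-hand expectation by $\sqrt{k\,\Tr(\Sigma)/m} = k\sqrt{\Tr(\Sigma)/N}$, and the assumption $r \geq 400\sqrt{\Tr(\Sigma)/N}$ pushes the whole estimate below $k/50$. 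This is where the $\sqrt{\Tr(\Sigma)/N}$ term in $r$ enters, and its dimension-free character is exactly what contraction preserves. Combining the two contributions, $\E\sup_v\Phi(v)\leq 6k/50$.

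Finally, $\sup_v\Phi(v)$ is a function of the $k$ independent random vectors $Z_1,\ldots,Z_k$, and changing any single $Z_j$ shifts $\Phi(v)$ by at most $1$ (only one summand, valued in $[0,1]$, is affected); McDiarmid's bounded-differences inequality therefore gives
\[
\PROB\Bigl(\sup_v\Phi(v) \geq k/2\Bigr) \;\leq\; \exp\Bigl(-\tfrac{2(k/2-6k/50)^2}{k}\Bigr) \;\leq\; \exp(-k/4),
\]
which is at most $\delta$ thanks to $k \geq 360\log(2/\delta)$. On the complementary event, $\sup_v\Phi(v) < k/2$ forces $|\{j : \inr{Z_j-\mu,v}\geq r/2\}| < k/2$ for every $v\in S^{d-1}$, and by the reduction of the first paragraph $\mu$ then defeats every $b$ with $\|b-\mu\|\geq r$. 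The real difficulty throughout is the uniformity in the infinite-dimensional direction $v$; the smoothing-plus-contraction device is the key step that converts that infinite supremum into the expected norm of a Rademacher sum $\E\|\sum_j\epsilon_j(Z_j-\mu)\|$, which depends only on $\Tr(\Sigma)$ and produces a sub-Gaussian bound free of the ambient dimension.
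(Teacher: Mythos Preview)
Your proof is correct, and it reaches the same conclusion as the paper by a genuinely different and more direct route.

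The paper establishes the uniform bound over $v\in r\cdot S^{d-1}$ in two stages: first it fixes a maximal $\eps$-separated net $V_1$ in the $L_2(X)$ norm, bounds its cardinality via the dual Sudakov inequality, and applies Chebyshev plus a binomial tail estimate together with a union bound over $V_1$; then it controls the oscillation between an arbitrary $x$ and its nearest net point $v_x$ by bounding an indicator by the corresponding absolute value, followed by symmetrization/contraction/desymmetrization and finally the bounded-differences inequality. Your argument collapses these two stages into one: you replace the hard indicator by the single Lipschitz ramp $\phi$, bound the pointwise expectation $\E\phi(W_j(v))$ directly by Chebyshev (this is where $\lambdamax$ enters, just as in the paper), and then control the centered supremum $\E\sup_v(\Phi(v)-\E\Phi(v))$ in a single stroke by symmetrization and the Ledoux--Talagrand contraction principle, landing on $\E\|\sum_j\epsilon_j(Z_j-\mu)\|\le k\sqrt{\Tr(\Sigma)/N}$ (this is where $\Tr(\Sigma)$ enters). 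The final concentration step via McDiarmid is the same in both proofs.

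What your approach buys is simplicity: no covering argument, no dual Sudakov, and no separate treatment of net points versus oscillations; the whole uniform bound reduces to one expected norm. What the paper's approach buys is a more explicit decomposition that makes visible the two scales ($\lambdamax$ from the per-direction variance on a finite net, $\Tr(\Sigma)$ from the global metric entropy), and the net/Sudakov machinery may transfer more readily to settings where a clean contraction step is unavailable. Both proofs share the same minor slack with the ceiling in $k=\lceil 360\log(2/\delta)\rceil$ when verifying $r^2 m\ge 160\lambdamax$.
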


\begin{proof}
Note that
\[
\|X_i-b\|^2 - \|X_i-\mu\|^2 = \|X_i-\mu+\mu-b\|^2 - \|X_i-\mu\|^2 = -2\inr{X_i-\mu,b-\mu}+\|b-\mu\|^2~,
\]
set $\ol{X}=X-\mu$ and put $v=b-\mu$. Thus, for a fixed $b$ that satisfies $\|b-\mu\| \geq r$, $\mu$ defeats $b$ if
\[
-\frac{2}{m}\sum_{i \in B_j} \inr{\ol{X}_i,v}+\|v\|^2>0
\]
on the majority of blocks $B_j$.

Therefore, to prove our claim we need that, with probability at least $1-\delta$, for every $v \in \R^d$ with $\|v\| \geq r$,
\begin{equation} \label{eq:basic}
-\frac{2}{m}\sum_{i \in B_j} \inr{\ol{X}_i,v}+\|v\|^2>0
\end{equation}
for more than $k/2$ blocks $B_j$.
Clearly, it suffices to show that \eqref{eq:basic} holds when $\|v\|=r$.

Consider a fixed $v \in \R^d$ with $\|v\|=r$. By Chebyshev's inequality, with probability at least $9/10$,
\[
\left|\frac{1}{m}\sum_{i \in B_j} \inr{\ol{X}_i,v}\right| \leq \sqrt{10} \sqrt{\frac{\EXP \inr{\ol{X},v}^2}{m}} \leq \sqrt{10} \|v\|\sqrt{\frac{\lambdamax}{m}}~,
\]
where recall that $\lambdamax$ is the largest eigenvalue of the covariance matrix of $X$. Thus, if
\begin{equation} \label{eq:r-cond-1}
r=\|v\| \ge 4\sqrt{10} \sqrt{\frac{\lambdamax}{m}}
\end{equation}
then with probability at least $9/10$,
\begin{equation} \label{eq:basic-1}
-\frac{2}{m}\sum_{i \in B_j} \inr{\ol{X}_i,v} \geq \frac{-r^2}{2}.
\end{equation}
Applying a standard binomial tail estimate, we see that
\eqref{eq:basic-1} holds for a single $v$ with probability at least
$1-\exp(-k/180)$ on at least $8/10$ of the blocks $B_j$.

Now we need to extend the above from a fixed vector $v$ to all
vectors with norm $r$. In order to show that
\eqref{eq:basic-1} holds simultaneously for all $v\in r\cdot S^{d-1}$
on at least $7/10$ of the blocks $B_j$, we first consider a
maximal $\eps$-separated set $V_1 \subset r\cdot S^{d-1}$
with respect to the $L_2(X)$ norm. In other words, $V_1$ is a subset of
$r\cdot S^{d-1}$ of maximal cardinality such that for all $v_1,v_2\in V_1$,
$\|v_1-v_2\|_{L_2(X)}= \inr{v_1-v_2,\Sigma(v_1-v_2)}^{1/2} \ge \eps$. We may estimate this cardinality
by the ``dual Sudakov'' inequality (see \cite{LeTa91} and also \cite{Ver09} for a version with the specified constant),
which implies that the cardinality of $V_1$ is bounded by
\[
\log |V_1| \leq \left(\frac{\E\left[\inr{G,\Sigma G}^{1/2}\right]}{4\eps/r}\right)^2~,
\]
where $G$ is a standard normal vector in $\R^d$.
Notice that for any $a \in \Rd$, $\EXP_X \inr{a,X}^2 = \inr{a,\Sigma a}$, and therefore,
\begin{eqnarray*}
\E\left[\inr{G,\Sigma G}^{1/2}\right]
& = & \E_G \left[\left(\E_X
    \left[\inr{G,\ol{X}}^2\right] \right)^{1/2}\right]
\leq \left(\E_X \E_G \left[\inr{G,\ol{X}}^2 \right] \right)^{1/2} \\
& = & \left(\E\left[ \left\|\ol{X}\right\|^2\right]\right)^{1/2} = \sqrt{\Tr(\Sigma)}~.
\end{eqnarray*}
Hence, by setting
\begin{equation} \label{eq:mesh}
\eps = 5 r \left(\frac{1}{k}\Tr(\Sigma)\right)^{1/2}~,
\end{equation}
we have $|V_1|\le e^{k/360}$ and thus, by the union bound,
with probability at least $1-e^{-k/360}\ge 1-\delta/2$,
\eqref{eq:basic-1} holds for all $v\in V_1$
on at least $8/10$ of the blocks $B_j$.

Next we check that property \eqref{eq:basic} holds simultaneously for all $x$ with
$\|x\|=r$ on at least $7/10$ of the blocks $B_j$.

For every $x \in r \cdot S^{d-1}$, let $v_x$ be the nearest element to
$x$ in $V_1$ with respect to the $L_2(X)$ norm.
It suffices to show that, with probability at least $1-\exp(-k/200)\ge 1-\delta/2$,
\begin{equation} \label{eq:osc}
\sup_{x \in r\cdot S^{d-1}} \frac{1}{k} \sum_{j=1}^k \IND_{\{|m^{-1}\sum_{i \in B_j} \inr{\ol{X}_i,x-v_x}| \geq r^2/4\}} \leq \frac{1}{10}~.
\end{equation}
Indeed, on that event it follows that for every $x \in r\cdot S^{d-1}$, on at least $7/10$ of the coordinate blocks $B_j$, both
$$
-\frac{2}{m} \sum_{i \in B_j} \inr{\ol{X}_i,v_x} \geq \frac{-r^2}{2} \ \ \ {\rm and} \ \ \ 2\left|\frac{1}{m} \sum_{i \in B_j} \inr{\ol{X}_i,x}-\frac{1}{m} \sum_{i \in B_j} \inr{\ol{X}_i,v_x}\right| < \frac{r^2}{2}
$$
hold and hence, on those blocks, $-\frac{2}{m} \sum_{i \in B_j}
\inr{\ol{X}_i,x} +r^2>0$ as required.

It remains to prove \eqref{eq:osc}. Observe that
\[
\frac{1}{k} \sum_{j=1}^k \IND_{\{|m^{-1}\sum_{i \in B_j} \inr{\ol{X}_i,x-v_x}| \geq r^2/4\}} \leq \frac{4}{r^2} \frac{1}{k}\sum_{j=1}^k \left|\frac{1}{m}  \sum_{i \in B_j} \inr{\ol{X}_i,x-v_x} \right|~.
\]
Since $\|x-v_x\|_{L_2(X)} = (\E \inr{X,x-v_x}^2)^{1/2} \leq \eps$ it follows that for every $j$
\[
\E \left|\frac{1}{m}  \sum_{i \in B_j} \inr{\ol{X}_i,x-v_x} \right|
\leq \sqrt{\frac{\EXP\left[\inr{\ol{X},x-v_x}^2\right]}{m}}
\leq \frac{\eps}{\sqrt{m}}~,
\]
and therefore,
\begin{eqnarray*}
\lefteqn{
  \E \sup_{x \in r\cdot S^{d-1}} \frac{1}{k} \sum_{j=1}^k \IND_{\{|m^{-1}\sum_{i \in B_j} \inr{\ol{X}_i,x-v_x}| \geq r^2/4\}} } \\
& \leq &
\frac{4}{r^2} \E \sup_{x \in r\cdot S^{d-1}} \frac{1}{k}\sum_{j=1}^k \left(\left|\frac{1}{m}  \sum_{i \in B_j} \inr{\ol{X}_i,x-v_x} \right| - \E \left|\frac{1}{m}  \sum_{i \in B_j} \inr{\ol{X}_i,x-v_x} \right|\right) +  \frac{4\eps}{r^2\sqrt{m}} \\
& \defeq & (A)+(B)~.
\end{eqnarray*}
To bound $(B)$, note that, by \eqref{eq:mesh},
\[
\frac{4\eps}{r^2\sqrt{m}} = 20 \left(\frac{\Tr(\Sigma)}{N}\right)^{1/2} \cdot \frac{1}{r} \leq \frac{1}{20}
\]
provided that
\[
r \geq 400 \left(\frac{\Tr(\Sigma)}{N}\right)^{1/2}.
\]
Turning to $(A)$, by symmetrization, contraction for Bernoulli processes and de-symmetrization (see, e.g., \cite{LeTa91}), and noting that $\|x-v_x\| \le 2r$, we have
\begin{align*}
(A) & \leq \frac{8}{r^2} \E \sup_{x \in r\cdot S^{d-1}} \left|\frac{1}{N}\sum_{i=1}^N  \inr{\ol{X}_i,x-v_x}\right|  \leq \frac{16}{r} \E \sup_{\{t:\|t\|\le 1\}} \left|\frac{1}{N} \sum_{i=1}^N \inr{\ol{X}_i,t}\right|
\\
& \leq  \frac{16}{r} \cdot \frac{\E\left\|\ol{X}\right\|}{\sqrt{N}}
= \frac{16}{r} \left(\frac{\Tr(\Sigma)}{N} \right)^{1/2} \leq \frac{1}{20}
\end{align*}
provided that $r \geq 320 \left(\frac{\Tr(\Sigma)}{N}\right)^{1/2}.$

Thus, for
\[
Y=  \sup_{x \in r\cdot S^{d-1}} \frac{1}{k} \sum_{j=1}^k \IND_{\{|m^{-1}\sum_{i \in B_j} \inr{\ol{X}_i,x-v_x}| \geq r^2/4\}}~,
\]
we have proved that $\EXP Y \le 1/20$. Finally, in order to prove \eqref{eq:osc}, it suffices to prove that,
$\PROB\{ Y > \EXP Y + 1/20\}\le e^{-k/200}$, which follows from the bounded differences inequality (see, e.g., \cite[Theorem 6.2]{BoLuMa13}).
\end{proof}

\subsection*{Proof of Theorem \ref{thm:geometry}}

Theorem \ref{thm:geometry} is easily derived from Lemma \ref{lem:tournament}.
Fix a block $B_j$, and recall that $Z_j=\frac{1}{m}\sum_{i \in B_j}X_i$.
Let $a,b \in \R^d$. Then
\begin{eqnarray*}
\frac{1}{m}\sum_{i \in B_j} \left(\|X_i-a\|^2- \|X_i -b\|^2 \right)
& = & \frac{1}{m}\sum_{i \in B_j} \left( \|X_i-b-(a-b)\|^2- \|X_i
  -b\|^2 \right)
\\
& = & -\frac{2}{m}\sum_{i \in B_j} \inr{X_i-b,a-b} + \|a-b\|^2 = (*)
\end{eqnarray*}
Observe that $-\frac{2}{m}\sum_{i \in B_j} \inr{X_i-b,a-b} = -2\inr{\frac{1}{m}\sum_{i \in B_j} X_i -b,a-b}=-2\inr{Z_j-b,a-b}$, and thus
\begin{eqnarray*}
(*) &= & -2\inr{Z_j-b,a-b} + \|a-b\|^2 \\
& = & -2\inr{Z_j-b,a-b} + \|a-b\|^2 + \|Z_j-b\|^2 - \|Z_j-b\|^2 \\
& = & \|Z_j-b - (a-b)\|^2 - \|Z_j-b\|^2 =
\|Z_j-a\|^2-\|Z_j-b\|^2~.
\end{eqnarray*}
Therefore, $(*)>0$ (i.e., $b$ defeats $a$ on block $B_j$)
if and only if $\|Z_j-a\| > \|Z_j-b\|$.

Recall that Lemma \ref{lem:tournament} states that, with probability at least $1-\delta$,
if $\|a-\mu\| \geq r$ then on more than $k/2$ blocks $B_j$, $\frac{1}{m}\sum_{i \in B_j} \left(\|X_i-a\|^2- \|X_i -\mu\|^2 \right) >0$, which, by the above argument, is the same as saying that for at least $k/2$ indices $j$,
$\|Z_j-a\| > \|Z_j-\mu\|$.
\endproof

\section{Computational considerations}

The problem of computing various notions of multivariate medians has been thoroughly studied in computational geometry and we refer to Aloupis \cite{Alo06} for a survey on this topic. For example, computing the geometric median---and therefore
the multivariate median-of-means estimator proposed by Hsu and Sabato \cite{HsSa16} and Minsker \cite{Min15}---involves solving a convex optimization problem. Thus, the geometric median may be approximated efficiently, see \cite{CoLeMiPaSi16} for the most recent result and for the rich history of the problem.


In contrast, efficiently computing, or even approximating,
the multivariate median proposed in this paper appears to be a
nontrivial challenge.

A possible approach for computing a mean estimator
that approximates $\wh{\mu}_N$ is based on a variant of
a coordinate descent algorithm that works roughly as follows: starting
with an arbitrary line in $\R^d$, one may discretize, with mesh $O(r)$,
the segment on the line
that supports the convex hull of $Z_1,\ldots,Z_k$. Then one uses pairwise
comparisons of the discretized values, using
the median-of-means
estimate, to find a point that defeats every other
candidate on the line that is at least distance $2r$ apart from it. (With a minor
adjustment of our arguments above one may prove that such a point always
exists.) Then take a line that is orthogonal to the first line and contains
the ``winner'' and repeat the search on that line. Continue for $d$ steps.
One may prove that the point $\wt{\mu}_N$ obtained at the final step is such that,
with probability at least $1-\delta$, $\|\wt{\mu}_N-\mu\|_{\infty}\le Cr$
for a numerical constant $C$. This algorithm runs in time
quadratic in $1/r$ and linear in $d$ but unfortunately it only guarantees
closeness to the true mean in the $\ell_{\infty}$ sense. If one replaces
orthogonal lines by random ones and keeps repeating the procedure,
one eventually achieves the desired guarantee in the Euclidean distance. However,
one needs to consider exponentially many (in $d$) directions to
approach $\mu$ with the desired precision. Note that such algorithms use $r$ as
an input parameter. Naturally, the value of $r$ is not known but the
algorithm is guaranteed to work well as long as the true value of $r$ is
larger that the prior guess.

Another possibility is to start with computing the geometric
median $\wt{\mu}^{(\delta)}$ of the $Z_j$. By \eqref{eq:minsker}, one may now
restrict search to a ball of radius at most $r\sqrt{\log(1/\delta)}$.
By exhaustively searching through this ball (after appropriately discretizing),
one finds an estimate with the desired properties in additional time
of order $\log^d(1/\delta)$. However, this is surely unrealistic
in most interesting cases.

We leave the question of efficiently computing the proposed mean estimate
(or another one with
sub-Gaussian performance guarantees) as an interesting research problem.


\end{document}